\newtheorem{theorem}{Theorem}
\newtheorem{corollary}{Corollary}
\begin{document}
\author{Ushangi Goginava and Larry Gogoladze}
\title[Convergence in Measure of Strong logarithmic means]{Convergence in
Measure of Strong logarithmic means of double Fourier series}
\address{U. Goginava, Department of Mathematics, Faculty of Exact and
Natural Sciences, Iv. Javakhishvili Tbilisi State University, Chavchavadze
str. 1, Tbilisi 0128, Georgia}
\email{zazagoginava@gmail.com}
\address{L. Gogoladze, Department of Mathematics, Faculty of Exact and
Natural Sciences, Iv. Javakhishvili Tbilisi State University, Chavchavadze
str. 1, Tbilisi 0128, Georgia}
\email{lgogoladze1@hotmail.com}
\maketitle

\begin{abstract}
Nörlund strong logarithmic means of double Fourier series acting from space $%
L\log L\left( \mathbb{T}^{2}\right) $ into space $L_{p}\left( \mathbb{T}%
^{2}\right) ,0<p<1$ are studied. The maximal Orlicz space such that the Nö%
rlund strong logarithmic means of double Fourier series for the functions
from this space converge in two-dimensional measure is found.
\end{abstract}

\footnotetext{%
2010 Mathematics Subject Classification 42A24 .
\par
Key words and phrases: double Fourier series, Orlicz space, Convergence in
measure}

\section{Introduction}

The rectangular partial sums of double Fourier series $S_{n,m}\left(
f;x,y\right) $ of the function $f\in L_{p}\left( \mathbb{T}^{2}\right) ,%
\mathbb{T}:=[-\pi ,\pi ),1<p<\infty $ converge in $L_{p}$ norm to the
function $f$, as $n\rightarrow \infty $ \cite{Zh}. In the case $L_{1}\left(
\mathbb{T}^{2}\right) $ this result does not hold . But for $f\in
L_{1}\left( \mathbb{T}\right) $, the operator $S_{n}\left( f;x\right) $ are
of weak type (1,1) \cite{Zy}. This estimate implies convergence of $%
S_{n}\left( f;x\right) $ in measure on $\mathbb{T}$ to the function $f\in
L_{1}\left( \mathbb{T}\right) $. However, for double Fourier series this
result does not hold \cite{Ge, Kon}. \ Moreover, it is proved that
quadratical partial sums $S_{n,n}\left( f;x,y\right) $ of double Fourier
series do not converge in two-dimensional measure on $\mathbb{T}^{2}$ \ even
for functions from Orlicz spaces wider than Orlicz space $L\log L\left(
\mathbb{T}^{2}\right) $. On the other hand, it is well-known that if the
function $f\in L\log L\left( \mathbb{T}^{2}\right) $, then rectangular
partial sums $S_{n,m}\left( f;x,y\right) $ converge in measure on $\mathbb{T}%
^{2}$.

Classical regular summation methods often improve the convergence of Fourier
seeries. For instance, the Fej\'er means of the double Fourier series of the
function $f\in L_{1}\left( \mathbb{T}^{2}\right) $ converge in $L_{1}\left(
\mathbb{T}^{2}\right) $ norm to the function $f$ \cite{Zh}. These means
present the particular case of the N\"{o}rlund means.

The Nörlund logarithmic means of double Fourier series are defined by%
\begin{equation*}
t_{n,m}\left( f;x,y\right) :=\frac{1}{l_{n}l_{m}}\sum\limits_{i=0}^{n}\sum%
\limits_{j=0}^{m}\frac{S_{i,j}\left( f;x,y\right) }{\left( n-i+1\right)
\left( m-j+1\right) },
\end{equation*}%
where $l_{n}:=\sum_{k=1}^{n+1}\left( 1/k\right) $ and by $S_{i,j}\left(
f;x,y\right) $ we denote rectangular partial sums of double Fourier series
of the function $f$.

It is well know that the method of Nörlund logarithmic means of double
Fourier series, is weaker than the Cesáro method of any positive order. In
\cite{Tk1} Tkebuchava proved, that these means of double Fourier series in
general do not converge in two-dimensional measure on $\mathbb{T}^{2}$ even
for functions from Orlicz spaces wider than Orlicz space $L\log L\left(
\mathbb{T}^{2}\right) $. For logarithmic means $t_{n,m}\left( f;x,y\right) $
of double Fourier series Tkebuchava \cite{Tk} proved that the following
results are true.

\begin{theorem}
\label{tkebuchava}Let $L_{Q}\left( \mathbb{T}^{2}\right) $ be an Orlicz
space, such that%
\begin{equation*}
L_{Q}\left( \mathbb{T}^{2}\right) \nsubseteq L\log L\left( \mathbb{T}%
^{2}\right) .
\end{equation*}%
Then the set of the function from the Orlicz space $L_{Q}\left( \mathbb{T}%
^{2}\right) $ with logarithmic means of rectangular partial sums of double
Fourier series, convergent in measure on $\mathbb{T}^{2}$, is of first Baire
category in $L_{Q}\left( \mathbb{T}^{2}\right) $.
\end{theorem}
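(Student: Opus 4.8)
The plan is to realize this as an application of the Banach--Steinhaus uniform boundedness principle in its contrapositive (gliding-hump / condensation-of-singularities) form: a family of quasilinear operators that is not uniformly bounded from an Orlicz space into the space of measurable functions with the topology of convergence in measure must have a residual set of functions on which it fails. Concretely, I would fix the hypothesis $L_Q(\mathbb{T}^2)\nsubseteq L\log L(\mathbb{T}^2)$, which by the standard Orlicz-space characterization is equivalent to the condition $\limsup_{t\to\infty} Q(t)/(t\log t)=0$ (up to the usual normalization), and then exhibit a sequence of test functions witnessing the failure of convergence in measure of $t_{n,m}(f;x,y)$.

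\medskip

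First I would recall the Fourier-analytic structure of the operator: since $t_{n,m}$ is a product of two one-dimensional Nörlund logarithmic kernels, one has
\begin{equation*}
t_{n,m}(f;x,y)=\frac{1}{\pi^2}\int_{\mathbb{T}^2} f(x-s,y-t)\,F_n(s)\,F_m(t)\,ds\,dt,
\end{equation*}
where $F_n(s)=\frac{1}{l_n}\sum_{i=0}^n \frac{D_i(s)}{n-i+1}$ is the one-dimensional logarithmic kernel and $D_i$ the Dirichlet kernel. The essential known fact (already implicit in Tkebuchava's work on the one-dimensional and the $L\log L$ boundary) is that these kernels have an $L^1$-norm and, more importantly, a distributional behavior that is only barely worse than that governing $L\log L$; quantitatively, one needs a lower bound showing that for suitable characteristic-function-type inputs the means $t_{n,m}$ are large on a set whose measure does not shrink. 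So the second step is to build, for each $N$, a function $g_N$ supported on a small square (or a sum of bumps placed along a diagonal grid), normalized so that $\|g_N\|_{L_Q}\le 1$, for which $\mathrm{meas}\{(x,y): |t_{n_N,m_N}(g_N;x,y)|>1\}\ge c>0$ for a suitable index pair $(n_N,m_N)$. Here the slack between $L_Q$ and $L\log L$ is exactly what lets one pack enough bumps / take a tall enough spike while keeping the Orlicz norm bounded; this is the heart of the argument and the place where the hypothesis is used in an essential way.

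\medskip

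The hard part, as usual in these category theorems, is the kernel estimate in the previous step: obtaining a genuine two-dimensional lower bound for $|t_{n,m}(g_N;\cdot)|$ on a set of non-vanishing measure, uniformly in $N$, while controlling $\|g_N\|_{L_Q}$. I expect to get this by choosing $g_N$ to be a sum of $M_N$ disjoint translates of a single normalized spike of height $h_N$, placed so that the one-dimensional logarithmic kernels $F_{n_N}, F_{m_N}$ do not allow cancellation between the contributions (a standard "lacunary placement" trick), so that $t_{n_N,m_N}(g_N;x,y)$ inherits a product lower bound of order $h_N \cdot (\log\text{-factor})^2 \cdot(\text{measure of support})$; matching this against the Orlicz-norm bound $M_N\,\mathrm{meas}(\text{spike})\,Q(h_N)\lesssim 1$ and invoking $Q(t)=o(t\log t)$ forces the desired non-shrinking level set.

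\medskip

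With such $\{g_N\}$ in hand, the final step is purely soft: define $Tf:=\sup_{n,m}|t_{n,m}(f;\cdot)|$ (or work directly with the failure of Cauchyness in measure), note that $T$ is a sequence of continuous, sublinear-in-the-quasinorm operators from $L_Q(\mathbb{T}^2)$ to the $F$-space $L^0(\mathbb{T}^2)$ of measurable functions, and observe that the test functions show $\sup_N \mathrm{meas}\{|t_{n_N,m_N}(g_N)|>1\}>0$ while $\|g_N\|_{L_Q}\le1$, i.e. $\{t_{n_N,m_N}\}$ is not equicontinuous at $0$. By the Banach--Steinhaus theorem for $F$-spaces (condensation of singularities), the set of $f\in L_Q(\mathbb{T}^2)$ for which $t_{n,m}(f;\cdot)$ fails to converge in measure is then the complement of a meager set — equivalently, the set of $f$ with convergent means is of first Baire category, which is exactly the assertion of Theorem~\ref{tkebuchava}. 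I would also remark that the same scheme, with the diagonal restriction $n=m$, recovers Tkebuchava's earlier non-convergence result for $t_{n,n}$ cited in the introduction, which serves as a useful consistency check.
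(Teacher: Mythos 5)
This statement is not proved in the paper at all: Theorem~\ref{tkebuchava} is quoted verbatim as a result of Tkebuchava from \cite{Tk} (``Tkebuchava \cite{Tk} proved that the following results are true''), so there is no in-paper argument to compare your proposal against. What you have written is therefore a reconstruction of the strategy behind the cited external result, and it does follow the standard template for such theorems (condensation of singularities in the $F$-space $L_{0}(\mathbb{T}^{2})$, driven by a family of test functions whose Orlicz norm is controlled precisely because $L_{Q}$ is strictly wider than $L\log L$). That template is the right one.

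Two concrete problems remain, one small and one essential. The small one: the correct analytic translation of $L_{Q}(\mathbb{T}^{2})\nsubseteq L\log L(\mathbb{T}^{2})$ is $\liminf_{t\to\infty}Q(t)/(t\log t)=0$, not $\limsup_{t\to\infty}Q(t)/(t\log t)=0$; the non-embedding only guarantees a \emph{sequence} $u_{k}\to\infty$ with $Q(u_{k})=o(u_{k}\log u_{k})$, and your spike heights must be chosen along exactly that sequence, which slightly constrains the construction. The essential one: the entire weight of the theorem sits in the quantitative lower bound you defer --- a two-dimensional distributional estimate showing that for a normalized spike $g_{N}$ the level set $\{(x,y):|t_{n_{N},m_{N}}(g_{N};x,y)|>1\}$ has measure bounded below, where $t_{n,m}$ is convolution with the product of the N\"orlund logarithmic kernels $F_{n}(s)=\frac{1}{l_{n}}\sum_{i=0}^{n}D_{i}(s)/(n-i+1)$. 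You assert a bound of the form $h_{N}\cdot(\log)^{2}\cdot\mathrm{meas}(\mathrm{support})$ but do not establish that $F_{n}$ retains enough of the singularity of $D_{n}$ (note that the Riesz weights $1/(i+1)$ would \emph{not} work here --- the paper's own Theorem~\ref{logest} shows the strong means with the reversed weights are bounded on $L\log L$ --- so the argument must exploit the specific concentration of the weights $1/(n-i+1)$ near $i=n$), nor the distribution-function estimate for the product kernel that produces the extra logarithm making $L\log L$ the exact threshold. Until that kernel estimate is carried out, the proposal is a plausible outline rather than a proof.
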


On the other hand, it is noted, that the regularuty of summation method
 does not allow to deduce the summability in measure of functional
sequence from its convergence in measure (see \cite{GGT}, Remark 1).

In this paper we consider the strong logarithmic means of rectangular
partial sums double Fourier series and prove that these means are acting
from space $L\log L\left( \mathbb{T}^{2}\right) $ into space $L_{p}\left(
\mathbb{T}^{2}\right) ,0<p<1$ (see Theorem \ref{logest} ). This fact implies
the convergence of strong logarithmic means of rectangular partial sums of
double Fourier series in measure on $\mathbb{T}^{2}$ to the function $f\in
L\log L\left( T^{2}\right) $ (see Corollary \ref{cor} ). Uniting these
results with statement from \cite{Tk1} we obtain, that the rectangular
partial sums of double Fourier series converge in measure for all functions
from Orlicz space if and only if their strong N\"{o}rlund logarithmic means and
strong N\"{o}rlund logarithmic means converge in measure (see Theorem \ref{eq}).
Thus, not all classic regular summation methods can improve the convergence
in measure of double Fourier series.

The results for summability of logarithmic means of Walsh-Fourier series can
be found in \cite{GogJAT, GGNSMH,gg,sza,ya}.

\section{Double Fourier Series and Conjugate Functions}

We denote by $L_{0}=L_{0}(\mathbb{T}^{2})$ the Lebesque space of functions
that are measurable and finite almost everywhere on $\mathbb{T}^{2}$.

Let $L_{Q}=L_{Q}(\mathbb{T}^{2})$ be the Orlicz space \cite{K-R} generated
by Young function $Q$, i.e. $Q$ is a convex continuous even function such
that $Q(0)=0$ and

\begin{equation*}
\lim\limits_{u\rightarrow +\infty }\frac{Q\left( u\right) }{u}=+\infty
,\,\,\,\,\lim\limits_{u\rightarrow 0}\frac{Q\left( u\right) }{u}=0.
\end{equation*}

This space is endowed with the norm
\begin{equation*}
\Vert f\Vert _{L_{Q}(\mathbb{T}^{2})}=\inf \{k>0:\iint\limits_{\mathbb{T}%
^{2}}Q(\left\vert f(x,y)\right\vert /k)dxdy\leq 1\}.
\end{equation*}

In particular, if $Q(u)=u\log ^{+}u$ $,\log ^{+}u:=1_{\left\{ u>1\right\}
}\log u$, then the corresponding space will be denoted by $L\log L(\mathbb{T}%
^{2})$.

Given a function $f\in L_{1}\left( \mathbb{T}^{2}\right) $, its double
Fourier series is defined by
\begin{equation}
\sum_{\left( n,m\right) \in \mathbb{Z}^{2}}\widehat{f}\left( m,n\right)
e^{imx}e^{iny},  \label{FS}
\end{equation}%
where $\mathbb{Z}:=\left\{ ...,-1,0,1,2,...\right\} $ and
\begin{equation}
\widehat{f}\left( m,n\right) =\frac{1}{4\pi ^{2}}\iint\limits_{\mathbb{T}%
^{2}}f(x,y)e^{-imx}e^{-iny}dxdy  \label{coeff}
\end{equation}%
are the Fourier coefficients of the function $f$ .

Denote by $S_{n,m}\left( f;x,y\right) $ the $\left( n,m\right) $th symmetric
rectangular partial sums of series (\ref{FS}). As is well-known, we have%
\begin{equation*}
S_{n,m}\left( f;x,y\right) =\frac{1}{\pi ^{2}}\iint\limits_{\mathbb{T}%
^{2}}f\left( s,t\right) D_{n}\left( x-s\right) D_{m}\left( y-t\right) dsdt
\end{equation*}%
and%
\begin{equation*}
D_{n}\left( u\right) :=\frac{\sin \left( \left( n+1/2\right) u\right) }{%
2\sin \left( u/2\right) }
\end{equation*}%
is the Dirichlet kernel.

One can associate three conjugate series to the double Fourier series (\ref%
{FS}):%
\begin{equation}
\widetilde{f}^{\left( 1,0\right) }\sim \sum\limits_{\left( j,k\right) \in
\mathbb{Z}^{2}}\left( -i\text{sign}j\right) \widehat{f}\left( j,k\right)
e^{i\left( jx+ky\right) }  \label{CF1}
\end{equation}%
(conjugate with respect to the first variable),%
\begin{equation}
\widetilde{f}^{\left( 0,1\right) }\sim \sum\limits_{\left( j,k\right) \in
\mathbb{Z}^{2}}\left( -i\text{sign}k\right) \widehat{f}\left( j,k\right)
e^{i\left( jx+ky\right) }  \label{CF2}
\end{equation}%
(conjugate with respect to the second variable)%
\begin{equation}
\widetilde{f}^{\left( 1,1\right) }\sim \sum\limits_{\left( j,k\right) \in
\mathbb{Z}^{2}}\left( -i\text{sign}j\right) \left( -i\text{sign}k\right)
\widehat{f}\left( j,k\right) e^{i\left( jx+ky\right) }  \label{CF11}
\end{equation}%
(conjugate with respect to both variables).

As is well known, if $f$ is an integrable function then%
\begin{equation*}
\widetilde{f}^{\left( 1,0\right) }\left( x,y\right) =\text{p.v.}\frac{1}{\pi
}\int\limits_{\mathbb{T}}\frac{f\left( s,y\right) }{2\tan \left( \frac{x-s}{2%
}\right) }ds,
\end{equation*}%
\begin{equation*}
\widetilde{f}^{\left( 0,1\right) }\left( x,y\right) =\text{p.v.}\frac{1}{\pi
}\int\limits_{\mathbb{T}}\frac{f\left( x,t\right) }{2\tan \left( \frac{y-t}{2%
}\right) }dt
\end{equation*}%
and%
\begin{equation*}
\widetilde{f}^{\left( 1,1\right) }\left( x,y\right) =\text{p.v.}\frac{1}{\pi
^{2}}\iint\limits_{\mathbb{T}^{2}}\frac{f\left( s,t\right) }{2\tan \left(
\frac{x-s}{2}\right) 2\tan \left( \frac{y-t}{2}\right) }dsdt.
\end{equation*}

Privalov's theorem (see e.g. \cite{Zy}, vol. II, p. 121) immediately implies
the a. e. existence of $\widetilde{f}^{\left( 1,0\right) }$ and $\widetilde{f%
}^{\left( 0,1\right) }$ under the assumption $f\in L_{1}\left( \mathbb{T}%
^{2}\right) $. \ The a. e. existence of $\widetilde{f}^{\left( 1,1\right) }$
for $f\in L\log L\left( \mathbb{T}^{2}\right) $ was proved by Zygmund \cite%
{Zy2,Zh2}.

We shall consider the symmetric rectangular partial sums of series (\ref{CF1}%
)-(\ref{CF11}) defined by%
\begin{equation*}
\widetilde{S}_{n,m}^{10}\left( f;x,y\right) :=\sum\limits_{|j|\leq
n}\sum\limits_{|k|\leq m}\left( -i\text{sign}j\right) \widehat{f}\left(
j,k\right) e^{i\left( jx+ky\right) },
\end{equation*}%
\begin{equation*}
\widetilde{S}_{n,m}^{01}\left( f;x,y\right) :=\sum\limits_{|j|\leq
n}\sum\limits_{|k|\leq m}\left( -i\text{sign}k\right) \widehat{f}\left(
j,k\right) e^{i\left( jx+ky\right) }
\end{equation*}%
and%
\begin{equation*}
\widetilde{S}_{n,m}^{11}\left( f;x,y\right) :=\sum\limits_{|j|\leq
n}\sum\limits_{|k|\leq m}\left( -i\text{sign}j\right) \left( -i\text{sign}%
k\right) \widehat{f}\left( j,k\right) e^{i\left( jx+ky\right) }.
\end{equation*}

It follows from (\ref{coeff}) that%
\begin{equation*}
\widetilde{S}_{n,m}^{10}\left( f;x,y\right) =\frac{1}{\pi ^{2}}\iint\limits_{%
\mathbb{T}^{2}}f\left( s,t\right) \widetilde{D}_{n}\left( x-s\right)
D_{m}\left( y-t\right) dsdt,
\end{equation*}%
\begin{equation*}
\widetilde{S}_{n,m}^{01}\left( f;x,y\right) =\frac{1}{\pi ^{2}}\iint\limits_{%
\mathbb{T}^{2}}f\left( s,t\right) D_{n}\left( x-s\right) \widetilde{D}%
_{m}\left( y-t\right) dsdt
\end{equation*}%
and%
\begin{equation*}
\widetilde{S}_{n,m}^{11}\left( f;x,y\right) =\frac{1}{\pi ^{2}}\iint\limits_{%
\mathbb{T}^{2}}f\left( s,t\right) \widetilde{D}_{n}\left( x-s\right)
\widetilde{D}_{m}\left( y-t\right) dsdt,
\end{equation*}%
where%
\begin{equation}
\widetilde{D}_{m}\left( u\right) :=\frac{1}{2\tan \left( u/2\right) }-\frac{%
\cos \left( \left( m+1\right) u\right) }{2\sin \left( u/2\right) },m=1,2,...
\label{conjdir}
\end{equation}%
is the conjugate Dirichlet kernel.

In this paper we also consider the following operators%
\begin{equation*}
\widetilde{\overline{S}}_{n,m}^{10}\left( f;x,y\right) =\frac{1}{\pi ^{2}}%
\iint\limits_{\mathbb{T}^{2}}f\left( s,t\right) \widetilde{D}_{n}\left(
x-s\right) \overline{D}_{m}\left( y-t\right) dsdt,
\end{equation*}%
\begin{equation*}
\widetilde{\overline{S}}_{n,m}^{01}\left( f;x,y\right) =\frac{1}{\pi ^{2}}%
\iint\limits_{\mathbb{T}^{2}}f\left( s,t\right) \overline{D}_{n}\left(
x-s\right) \widetilde{D}_{m}\left( y-t\right) dsdt
\end{equation*}%
and%
\begin{equation*}
\overline{S}_{n,m}\left( f;x,y\right) =\frac{1}{\pi ^{2}}\iint\limits_{%
\mathbb{T}^{2}}f\left( s,t\right) \overline{D}_{n}\left( x-s\right)
\overline{D}_{m}\left( y-t\right) dsdt,
\end{equation*}%
where $\overline{D}_{n}\left( u\right) $ is a modified Dirichlet kernel
defined by%
\begin{equation*}
\overline{D}_{n}\left( u\right) :=\frac{\sin \left( nu\right) }{2\tan \left(
u/2\right) }.
\end{equation*}

\section{Strong Riesz Logarithmic and Strong N\"{o}rlund Logarithmic means}

The strong Riesz logarithmic means, strong Nörlund logarithmic means and
strong Fejér means of rectangular partial sums $\widetilde{S}_{i,j}^{ab}f$
defined by%
\begin{equation*}
\widetilde{R}_{n,m}^{ab}\left( f;x,y\right) :=\frac{1}{l_{n}l_{m}}%
\sum\limits_{i=0}^{n}\sum\limits_{j=0}^{m}\frac{\left\vert \widetilde{S}%
_{i,j}^{ab}\left( f;x,y\right) \right\vert }{\left( i+1\right) \left(
j+1\right) },
\end{equation*}%
\begin{equation*}
\widetilde{\tau }_{n,m}^{ab}\left( f;x,y\right) :=\frac{1}{l_{n}l_{m}}%
\sum\limits_{i=0}^{n}\sum\limits_{j=0}^{m}\frac{\left\vert \widetilde{S}%
_{i,j}^{ab}\left( f;x,y\right) \right\vert }{\left( n-i+1\right) \left(
m-j+1\right) },
\end{equation*}%
\begin{equation*}
\widetilde{\sigma }_{n,m}^{ab}\left( f;x,y\right) :=\frac{1}{\left(
n+1\right) \left( m+1\right) }\sum\limits_{i=0}^{n}\sum\limits_{j=0}^{m}%
\left\vert \widetilde{S}_{i,j}^{ab}\left( f;x\right) \right\vert ,a,b=0,1.
\end{equation*}

Denote%
\begin{equation*}
\widetilde{R}_{n,m}^{00}\left( f\right) =R_{n,m}\left( f\right) ,\widetilde{S%
}_{n,m}^{00}\left( f\right) =S_{n,m}\left( f\right) ,\widetilde{\tau }%
_{n,m}^{00}\left( f\right) =\tau _{n,m}\left( f\right) ,\widetilde{\sigma }%
_{n,m}^{00}\left( f\right) =\sigma _{n,m}\left( f\right) .
\end{equation*}

In \cite{Gogoladze}, in particular, it is proved that the following
estimation is true.

\begin{theorem}
\label{Gogoladze}Let $f\in L\log L\left( \mathbb{T}^{2}\right) $ and $0<p<1$%
. Then for any $a,b=0,1$ the following estimation holds%
\begin{equation*}
\left( \iint\limits_{\mathbb{T}^{2}}\left( \sup\limits_{n,m}\widetilde{%
\sigma }_{n,m}^{\left( a,b\right) }\left( f;x,y\right) \right)
^{p}dxdy\right) ^{1/p}\leq c_{1}\iint\limits_{\mathbb{T}}\left\vert f\left(
x,y\right) \right\vert \log ^{+}\left\vert f\left( x,y\right) \right\vert
dxdy+c_{2}.
\end{equation*}
\end{theorem}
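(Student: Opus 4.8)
The plan is to majorize, pointwise, the maximal strong Fej\'er operator $\sup_{n,m}\widetilde{\sigma}^{ab}_{n,m}(f;x,y)$ by a finite sum of operators of two types --- ``conjugate--function'' operators that do not depend on the summation indices $i,j$, and one ``oscillating'' model operator --- and to estimate each in $L_{p}(\mathbb{T}^{2})$, $0<p<1$, by the right--hand side. I shall use freely that for $0<p<1$ the $p$--th power of the quasi--norm is subadditive, that a weak--type bound $\sup_{\lambda>0}\lambda\,|\{|Tf|>\lambda\}|\le A$ on the finite measure space $\mathbb{T}^{2}$ implies $\|Tf\|_{p}\le c_{p}\,A$ (integrate the distribution function), and that $\|f\|_{1}\le c+c\iint_{\mathbb{T}^{2}}|f|\log^{+}|f|$, so any term bounded by $c\|f\|_{1}$ is admissible.

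I begin by decomposing the kernels. From $D_{i}=\overline{D}_{i}+\tfrac12\cos(i\cdot)$ and the formula for $\widetilde{D}_{i}$ above, each of the two kernel factors of $\widetilde{S}^{ab}_{i,j}$ is the sum of an index--independent singular part (the Hilbert kernel $\tfrac{1}{2\tan(u/2)}$, present exactly when the corresponding superscript equals $1$), a uniformly bounded part, and an oscillating part ($\overline{D}_{i}$, or $\tfrac{\cos((i+1)u)}{2\sin(u/2)}$). Multiplying out and using the triangle inequality, the mean $\widetilde{\sigma}^{ab}_{n,m}(f)$ is majorized by the sum of the strong Fej\'er means of the pieces. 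A piece carrying at least one index--independent Hilbert factor collapses under the strong Fej\'er average: it produces, up to bounded factors, either one of $|f|,\,|\widetilde{f}^{(1,0)}|,\,|\widetilde{f}^{(0,1)}|,\,|\widetilde{f}^{(1,1)}|$, or a one--dimensional strong Fej\'er operator in one variable composed with a conjugation (or a Fourier--coefficient estimate) in the other. The term $|\widetilde{f}^{(1,1)}|$ is disposed of by Zygmund's theorem quoted above: $f\in L\log L(\mathbb{T}^{2})$ implies $\widetilde{f}^{(1,1)}\in L_{1}$ with $\|\widetilde{f}^{(1,1)}\|_{1}\le c\iint_{\mathbb{T}^{2}}|f|\log^{+}|f|+c$, hence it lies in $L_{p}$ with the desired bound. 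The remaining collapsed pieces are controlled by $c\|f\|_{1}$, by means of the one--dimensional Carleson--Hunt theorem, the weak $(1,1)$ estimate for the conjugate function of an integrable function of the complementary variable, and the observation that the one--dimensional strong Fej\'er maximal operator $g\mapsto\sup_{n}\tfrac1{n+1}\sum_{i\le n}|S_{i}g|$ is pointwise dominated by the Carleson maximal operator, via $\tfrac1{n+1}\sum_{i\le n}|S_{i}g|\le(\tfrac1{n+1}\sum_{i\le n}|S_{i}g|^{2})^{1/2}\le\sup_{i}|S_{i}g|$, and has $L^{q}\to L^{q}$ norm $O((q-1)^{-1})$, so it extrapolates to a bounded map $L\log L\to L^{1,\infty}$, hence into $L_{p}$.

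There remains the single oscillating model operator $\Sigma f:=\sup_{n,m}\tfrac{1}{(n+1)(m+1)}\sum_{i\le n}\sum_{j\le m}|\overline{S}_{i,j}(f;x,y)|$, where $\overline{S}_{i,j}$ denotes the tensor product of the modified partial--sum operators (a Fourier multiplier with symbol of modulus at most $1$); by the first paragraph it is enough to prove the weak--type bound $\sup_{\lambda>0}\lambda\,|\{\Sigma f>\lambda\}|\le c\iint_{\mathbb{T}^{2}}|f|\log^{+}|f|+c$, and this I expect to be the main obstacle. The basic quantitative input, obtained on a function $g$ by Cauchy--Schwarz in $i,j$ followed by Parseval's identity, is the clean fixed--$(n,m)$ bound $\tfrac{1}{(n+1)(m+1)}\sum_{i\le n}\sum_{j\le m}\|\overline{S}_{i,j}g\|_{2}^{2}\le\|g\|_{2}^{2}$; this rests on orthogonality and cannot be recovered by replacing the kernels with their moduli, since $\sum_{i\le n}|\overline{D}_{i}(u)|\approx n/|u|$ is too large for a one--variable kernel estimate. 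The work is to promote this fixed--$(n,m)$ information to control of the supremum over all $(n,m)$: simply summing $L_{1}$--norms over dyadic scales in the two parameters diverges, so one has to exploit cancellation across scales --- a square--function/vector--valued argument for the two--parameter Ces\`aro average together with the $L^{2}$ mapping of the Carleson operator in each variable --- to obtain, for bounded $g$, a weak--type estimate carrying only a single logarithmic loss in $\|g\|_{\infty}$. Then one feeds in the level--set decomposition $f=\sum_{\nu\ge0}f_{\nu}$, $f_{\nu}:=f\cdot\mathbf{1}_{E_{\nu}}$, $E_{\nu}:=\{2^{\nu}\le|f|<2^{\nu+1}\}$ (with $f_{0}$ for $\{|f|<2\}$), applies this endpoint estimate to each $f_{\nu}$ (for which $\|f_{\nu}\|_{\infty}\le 2^{\nu+1}$ and $\|f_{\nu}\|_{1}\approx 2^{\nu}|E_{\nu}|$), and sums the contributions --- the relevant series converges because $\sum_{\nu}\nu\,2^{\nu}|E_{\nu}|\approx\iint_{\mathbb{T}^{2}}|f|\log^{+}|f|+\|f\|_{1}$ --- which yields the stated weak--type bound for $\Sigma$, and with it the theorem. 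The technical heart is thus the two--parameter strong maximal endpoint estimate for $\Sigma$ on bounded functions with only a single logarithm of the sup--norm; the rest is the kernel reduction and the summation over level sets.
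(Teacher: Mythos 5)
First, a structural point: the paper does not prove this statement at all --- it is quoted verbatim from L.~Gogoladze's 1977 paper on $(H,k)$-summability of multiple trigonometric Fourier series (the sentence preceding the theorem says ``In [Gogoladze]\dots it is proved that\dots''). So there is no in-paper argument to compare yours with, and your proposal has to stand on its own.

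On its own it has a genuine gap, and you locate it yourself: the operator $\Sigma f=\sup_{n,m}\frac{1}{(n+1)(m+1)}\sum_{i\le n}\sum_{j\le m}\left\vert \overline{S}_{i,j}f\right\vert$ is never actually estimated. You derive the fixed-$(n,m)$ $L_{2}$ bound from Parseval, observe correctly that summing dyadic scales in $L_{1}$ diverges, and then state that one must ``exploit cancellation across scales'' to get a weak-type bound for bounded $g$ with a single logarithmic loss --- but that estimate, which you yourself call the technical heart, is asserted rather than proved. Worse, the identity $\overline{D}_{i}=\frac{1}{2}\left( D_{i}+D_{i-1}\right)$ shows that $\overline{S}_{i,j}$ is an average of four ordinary rectangular partial sums, so $\Sigma f$ is pointwise comparable to $\sup_{n,m}\sigma _{n,m}\left( f\right)$ itself: in the case $a=b=0$ your kernel decomposition produces no Hilbert factors at all, the oscillating-times-oscillating piece is the original operator, and the ``reduction'' reduces nothing --- the entire content of the theorem in that case sits inside the unproved endpoint estimate. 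Two smaller problems: (i) the claim $\Vert \widetilde{f}^{\left( 1,1\right) }\Vert _{1}\leq c\iint \left\vert f\right\vert \log ^{+}\left\vert f\right\vert +c$ is false as stated --- integrability of the double conjugate requires $f\in L\left( \log L\right) ^{2}\left( \mathbb{T}^{2}\right)$; for $f\in L\log L$ one only gets $\widetilde{f}^{\left( 1,1\right) }\in L_{p}$, $0<p<1$ (single conjugation into $L_{1}$, then Kolmogorov), which is all you need, so say that instead; (ii) the final summation over level sets is not as automatic as you suggest: for $0<p<1$ one has $\sum_{\nu }a_{\nu }^{p}\geq \bigl( \sum_{\nu }a_{\nu }\bigr) ^{p}$, so $\sum_{\nu }\bigl( \left( 1+\nu \right) 2^{\nu }\left\vert E_{\nu }\right\vert \bigr) ^{p}$ is not controlled by the $p$-th power of $\iint \left\vert f\right\vert \log ^{+}\left\vert f\right\vert$ without a further argument (splitting the levels $\lambda _{\nu }$ and using finiteness of the measure space), and the weak-type version of the summation has the same difficulty. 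None of this is unfixable --- it is essentially the content of the cited 1977 paper --- but as written your text is an outline whose central estimate is missing.
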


Appling Hardy's transformation, we obtain%
\begin{eqnarray}
&&l_{n}l_{m}\widetilde{R}_{n,m}^{ab}\left( f;x,y\right)  \label{L1-L4} \\
&=&\sum\limits_{i=0}^{n-1}\sum\limits_{j=0}^{m-1}\frac{\widetilde{\sigma }%
_{i,j}^{ab}\left( f;x,y\right) }{\left( i+2\right) \left( j+2\right) }
\notag \\
&&+\sum\limits_{j=0}^{m-1}\frac{1}{j+2}\widetilde{\sigma }_{n,j}^{ab}\left(
f;x,y\right)  \notag \\
&&+\sum\limits_{i=0}^{n-1}\frac{1}{i+2}\widetilde{\sigma }_{i,m}^{ab}\left(
f;x,y\right)  \notag \\
&&+\widetilde{\sigma }_{n,m}^{ab}\left( f;x,y\right) .  \notag
\end{eqnarray}%
Consequently, from Theorem \ref{Gogoladze} we obtain%
\begin{eqnarray}
&&\left( \iint\limits_{\mathbb{T}^{2}}\left( \widetilde{R}_{n,m}^{ab}\left(
f;x,y\right) \right) ^{p}dxdy\right) ^{1/p}  \label{weak-R} \\
&\leq &4\left( \iint\limits_{\mathbb{T}^{2}}\left( \sup\limits_{n,m}%
\widetilde{\sigma }_{n,m}^{\left( a,b\right) }\left( f;x,y\right) \right)
^{p}dxdy\right) ^{1/p}  \notag \\
&\leq &c_{1}\iint\limits_{\mathbb{T}^{2}}\left\vert f\left( x,y\right)
\right\vert \log ^{+}\left\vert f\left( x,y\right) \right\vert dxdy+c_{2}%
\text{ \ }\left( f\in L\log L\left( \mathbb{T}^{2}\right) \right) .  \notag
\end{eqnarray}

Since%
\begin{eqnarray*}
&&\left( \int\limits_{\mathbb{T}}\left( \sup\limits_{n}\sigma _{n}\left(
f;x\right) \right) ^{p}dx\right) ^{1/p},\left( \int\limits_{\mathbb{T}%
}\left( \sup\limits_{n}\widetilde{\sigma }_{n}\left( f;x\right) \right)
^{p}dx\right) ^{1/p} \\
&\leq &c_{1}\int\limits_{\mathbb{T}}\left\vert f\left( x\right) \right\vert
dx,f\in L_{1}\left( \mathbb{T}\right) ,0<p<1.
\end{eqnarray*}

analogously, for one dimensional case we can prove that%
\begin{eqnarray}
&&\left( \int\limits_{\mathbb{T}}\left( R_{n}\left( f;x\right) \right)
^{p}dx\right) ^{1/p},\left( \int\limits_{\mathbb{T}}\left( \widetilde{R}%
_{n}\left( f;x\right) \right) ^{p}dx\right) ^{1/p}  \label{estoneconj} \\
&\leq &c_{1}\int\limits_{\mathbb{T}}\left\vert f\left( x\right) \right\vert
dx,f\in L_{1}\left( \mathbb{T}\right) ,0<p<1,  \notag
\end{eqnarray}%
where $\sigma _{n}\left( f;x\right) ,\widetilde{\sigma }_{n}\left(
f;x\right) ,R_{n}\left( f;x\right) $ and $\widetilde{R}_{n}\left( f;x\right)
$ are strong Fej\'er and strong Riesz means of Fourier series and conjugate
Fourier series.

\section{Main Results}

\begin{theorem}
\label{logest}Let $f\in L\log L\left( \mathbb{T}^{2}\right) $ and $0<p<1$.
Then the following estimation holds%
\begin{equation*}
\left( \iint\limits_{\mathbb{T}^{2}}\left( \tau _{n,m}\left( f;x,y\right)
\right) ^{p}dxdy\right) ^{1/p}\leq c_{1}\iint\limits_{\mathbb{T}%
^{2}}\left\vert f\left( x,y\right) \right\vert \log ^{+}\left\vert f\left(
x,y\right) \right\vert dxdy+c_{2}.
\end{equation*}
\end{theorem}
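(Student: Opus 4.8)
The plan is to reduce the estimate for the strong N\"{o}rlund logarithmic means $\tau_{n,m}$ to the already-established estimate for the strong Riesz logarithmic means $\widetilde{R}_{n,m}^{ab}$ (in the special case $a=b=0$, i.e. $R_{n,m}$) given in \eqref{weak-R}. The key structural observation is that the N\"{o}rlund weight $1/((n-i+1)(m-j+1))$ and the Riesz weight $1/((i+1)(j+1))$, after normalizing by $l_nl_m$, are comparable \emph{on average}, even though they are not comparable pointwise in $(i,j)$. More precisely, I would first isolate the diagonal-type blocks where $i$ is comparable to $n$ (so $n-i+1$ is small) versus where $i$ is small (so $i+1$ is small), and similarly in $j$, splitting the double sum defining $l_nl_m\tau_{n,m}$ into four regions according to whether $i\le n/2$ or $i>n/2$ and $j\le m/2$ or $j>m/2$.

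In the region $i\le n/2,\ j\le m/2$ one has $n-i+1\asymp n+1$ and $m-j+1\asymp m+1$, so
\begin{equation*}
\frac{1}{l_nl_m}\sum_{i\le n/2}\sum_{j\le m/2}\frac{\left|S_{i,j}(f;x,y)\right|}{(n-i+1)(m-j+1)}\le \frac{c}{l_nl_m}\cdot\frac{1}{(n+1)(m+1)}\sum_{i\le n/2}\sum_{j\le m/2}\left|S_{i,j}(f;x,y)\right|,
\end{equation*}
and the right-hand side is dominated by $\frac{c}{l_nl_m}\sigma_{n,m}(f;x,y)$ up to constants, hence controlled by Theorem \ref{Gogoladze} (with $a=b=0$). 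In the ``mixed'' regions, say $i\le n/2$ but $j>m/2$, I would substitute $j'=m-j$ so that $m-j+1=j'+1$ ranges over $1,\dots,m/2+1$; then $n-i+1\asymp n+1$ and the inner sum in $j$ becomes a Riesz-type average in the second variable of the partial one-dimensional strong means, which can be handled by combining Theorem \ref{Gogoladze} in one variable with Hardy's transformation exactly as in \eqref{L1-L4}--\eqref{estoneconj}. The genuinely delicate region is $i>n/2,\ j>m/2$: here, setting $i'=n-i$, $j'=m-j$, both denominators become $i'+1$ and $j'+1$, so this block is literally of the same form as the Riesz mean $R_{n,m}$ run ``backwards'' — i.e.\ $\frac{1}{l_nl_m}\sum_{i'\le n/2}\sum_{j'\le m/2}\frac{|S_{n-i',m-j'}(f;x,y)|}{(i'+1)(j'+1)}$ — and I would bound it by the full sum $\sup_{N,M}R_{N,M}(f;x,y)$-type maximal quantity or directly by reindexing back into \eqref{weak-R}.

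The main obstacle I anticipate is precisely this last region: the partial sums $S_{n-i',m-j'}$ are indexed near the \emph{top} of the range rather than from $0$, so one cannot directly quote \eqref{weak-R}, which sums $S_{i,j}$ from $i=0$. The fix is to apply Hardy's (Abel) summation by parts as in \eqref{L1-L4} to rewrite the backward Riesz block in terms of the strong Fej\'er means $\sigma_{i',j'}$ and their one-variable partial analogues $\sigma_{n,j'}$, $\sigma_{i',m}$, and then invoke Theorem \ref{Gogoladze} together with its one-dimensional counterpart; the boundary sums in $i'$ alone and $j'$ alone are handled by the one-dimensional estimates recalled after \eqref{estoneconj}. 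Collecting the four regions and the lower-dimensional boundary terms, and using $l_n\asymp\log(n+2)$ to absorb the normalizing constants, yields
\begin{equation*}
\left(\iint_{\mathbb{T}^2}\left(\tau_{n,m}(f;x,y)\right)^p dxdy\right)^{1/p}\le c_1\iint_{\mathbb{T}^2}|f(x,y)|\log^+|f(x,y)|\,dxdy+c_2,
\end{equation*}
uniformly in $n,m$, which is the claimed estimate. Throughout, I would use the subadditivity of $t\mapsto t^p$ for $0<p<1$ to pass from the sum of the four regional contributions to the sum of their $L_p$ quasi-norms raised to the $p$-th power, so no interaction terms arise.
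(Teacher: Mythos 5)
Your reduction fails precisely in the region you yourself identify as delicate, and the fix you propose for it does not work. After the substitution $i'=n-i$, $j'=m-j$ the hard block is $\frac{1}{l_nl_m}\sum_{i'}\sum_{j'}\frac{|S_{n-i',m-j'}(f;x,y)|}{(i'+1)(j'+1)}$, and applying Hardy's (Abel) summation in $i'$ to $\sum_{i'}\frac{|S_{n-i'}|}{i'+1}$ produces the quantities $A_{i'}=\sum_{\nu=0}^{i'}|S_{n-\nu}|=\sum_{k=n-i'}^{n}|S_{k}|$, i.e.\ averages of $|S_k|$ over the \emph{short top block} $[n-i',\,n]$, not the Fej\'er means $\sigma_{i'}=\frac{1}{i'+1}\sum_{k=0}^{i'}|S_k|$ that appear in \eqref{L1-L4}. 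Theorem \ref{Gogoladze} says nothing about these delayed top-block averages; the only trivial bound is $\frac{1}{i'+1}A_{i'}\le\frac{n+1}{i'+1}\sigma_n$, whose factor $\frac{n+1}{i'+1}$ destroys the summability against the weight $\frac{1}{(i'+1)(i'+2)}$ (one gets a contribution of order $(n+1)\sigma_n/l_n$). The same defect already appears in your mixed regions, where one variable is a backward block. So the decomposition into four index regions does not by itself reduce the N\"{o}rlund weight to the Riesz weight: controlling partial sums indexed from the top of the range is the whole content of the theorem, and summation by parts alone cannot do it.

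The paper's proof supplies exactly the missing ingredient, which is a modulation identity rather than an index decomposition: writing $\alpha_n(t)=\sin((n+1)t)$, $\beta_n(t)=\cos((n+1)t)$ and expanding $\sin((n-k+1/2)u)=\sin((n+1)u)\cos((k+1/2)u)-\cos((n+1)u)\sin((k+1/2)u)$ in the Dirichlet kernel, one obtains the identity \eqref{S_n-k},
\begin{equation*}
S_{n-k}(f;x)=-\alpha_n(x)\widetilde{S}_k(f\beta_n;x)+\beta_n(x)\widetilde{S}_k(f\alpha_n;x)-\beta_n(x)S_k(f\beta_n;x)-\alpha_n(x)S_k(f\alpha_n;x)+\overline{S}_{n+1}(f;x),
\end{equation*}
which converts the top-indexed partial sum $S_{n-k}$ into \emph{bottom-indexed} partial sums and conjugate partial sums of the modulated functions $f\alpha_n$, $f\beta_n$ (whose $L\log L$ norms equal that of $f$), plus a single modified partial sum $\overline{S}_{n+1}(f)$. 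After this, $\tau_{n,m}$ is dominated by Riesz-type strong means $\widetilde{R}^{ab}_{n,m}$ of modulated functions, to which \eqref{weak-R} (hence Theorem \ref{Gogoladze} via \eqref{L1-L4}) applies directly, with the $\overline{S}_{n+1}$ term handled by the one-dimensional estimates around \eqref{estoneconj}. Without this identity, or some equivalent device for the backward block, your argument has a genuine gap and cannot be completed as written.
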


\begin{theorem}
\label{theoremconv}Let $f\in L\log L\left( \mathbb{T}^{2}\right) $ and $%
0<p<1 $. Then%
\begin{equation*}
\iint\limits_{\mathbb{T}^{2}}\left( \frac{1}{l_{n}l_{m}}\sum%
\limits_{i=0}^{n}\sum\limits_{j=0}^{m}\frac{\left\vert S_{i,j}\left(
f;x,y\right) -f\left( x,y\right) \right\vert }{\left( n-i+1\right) \left(
m-j+1\right) }\right) ^{p}dxdy\rightarrow 0
\end{equation*}%
as $n,m\rightarrow \infty .$
\end{theorem}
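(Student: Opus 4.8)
The plan is to deduce Theorem~\ref{theoremconv} from Theorem~\ref{logest} by a truncation argument; the one point requiring care is the additive constant $c_2$ in that estimate, and defeating it is the main obstacle.

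First I would fix $\varepsilon>0$ and split $f$ according to its size. Since $f\in L\log L(\mathbb{T}^2)\subset L_1(\mathbb{T}^2)$, dominated convergence shows that both $\Vert f\mathbf{1}_{\{|f|>M\}}\Vert_1$ and $\iint_{\mathbb{T}^2}|f|\log^+|f|\,\mathbf{1}_{\{|f|>M\}}\,dxdy$ tend to $0$ as $M\to\infty$. Choose $M\ge1$ so large that, writing $h:=f\mathbf{1}_{\{|f|>M\}}$ and $g:=f-h=f\mathbf{1}_{\{|f|\le M\}}$, one has $\Vert h\Vert_1<\eta$ and $\iint_{\mathbb{T}^2}|h|\log^+|h|\,dxdy<\eta$, where $\eta=\eta(\varepsilon)\in(0,1)$ will be fixed at the end; note $g\in L_\infty(\mathbb{T}^2)\subset L_2(\mathbb{T}^2)$. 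Denoting by $\Phi_{n,m}(x,y)$ the integrand in the statement, linearity of $S_{i,j}$ and the triangle inequality give the pointwise bound $\Phi_{n,m}\le A_{n,m}+B_{n,m}$, where $A_{n,m}$ and $B_{n,m}$ are the same expressions with $f$ replaced by $g$ and by $h$ respectively; since $0<p<1$ we get $\iint_{\mathbb{T}^2}\Phi_{n,m}^p\le\iint_{\mathbb{T}^2}A_{n,m}^p+\iint_{\mathbb{T}^2}B_{n,m}^p$.

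For the bounded summand I would use $L_2$-theory and the regularity of the logarithmic mean. By Parseval, $a_{i,j}:=\Vert S_{i,j}(g)-g\Vert_{L_2(\mathbb{T}^2)}\to0$ as $\min(i,j)\to\infty$ and $a_{i,j}\le2\Vert g\Vert_2$ for all $i,j$; by Minkowski's inequality $\Vert A_{n,m}\Vert_2\le\frac{1}{l_nl_m}\sum_{i=0}^n\sum_{j=0}^m\frac{a_{i,j}}{(n-i+1)(m-j+1)}$, i.e.\ the N\"orlund logarithmic mean of the bounded nonnegative double sequence $(a_{i,j})$. Since $\frac{1}{l_n}\sum_{i=0}^N\frac{1}{n-i+1}\le\frac{N+1}{(n-N+1)\,l_n}\to0$ for every fixed $N$, the mass this mean puts on $\{i\le N\}\cup\{j\le N\}$ tends to $0$ as $n,m\to\infty$, while on the complementary set $a_{i,j}$ is as small as we like once $N$ is large; hence $\Vert A_{n,m}\Vert_2\to0$, and since $\mathbb{T}^2$ has finite measure, $\iint_{\mathbb{T}^2}A_{n,m}^p\,dxdy\to0$ as $n,m\to\infty$ for our now-fixed $g$.

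For the small summand, observe that $\frac{1}{l_nl_m}\sum_{i=0}^n\sum_{j=0}^m\frac{1}{(n-i+1)(m-j+1)}=1$, so $B_{n,m}\le\tau_{n,m}(h)+|h|$ and $\iint_{\mathbb{T}^2}B_{n,m}^p\le\iint_{\mathbb{T}^2}(\tau_{n,m}(h))^p+\iint_{\mathbb{T}^2}|h|^p$. The last term is at most $|\mathbb{T}^2|^{1-p}\Vert h\Vert_1^p<|\mathbb{T}^2|^{1-p}\eta^p$ by H\"older. For the first term the crucial point is that $\tau_{n,m}$ is positively homogeneous, since $|S_{i,j}(\lambda h)|=\lambda|S_{i,j}(h)|$ for $\lambda>0$; thus applying Theorem~\ref{logest} to $\lambda h$ with $\lambda=1/\eta\ge1$ and using $\log^+(\lambda|h|)\le\log\lambda+\log^+|h|$ gives
\[
\left( \iint_{\mathbb{T}^2}\left( \tau_{n,m}(h)\right) ^p dxdy\right) ^{1/p}\le c_1\eta\log(1/\eta)+(c_1+c_2)\eta
\]
uniformly in $n,m$. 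Combining the three bounds, $\limsup_{n,m\to\infty}\iint_{\mathbb{T}^2}\Phi_{n,m}^p\,dxdy\le\bigl(c_1\eta\log(1/\eta)+(c_1+c_2)\eta\bigr)^p+|\mathbb{T}^2|^{1-p}\eta^p$, which tends to $0$ as $\eta\to0$; choosing $\eta$ small enough that the right-hand side is $<\varepsilon$ finishes the proof, $\varepsilon$ being arbitrary. The step I expect to be the real obstacle is precisely the treatment of the additive constant $c_2$: a crude splitting $f=(\text{trigonometric polynomial})+(\text{function of small }L\log L\text{ norm})$ only bounds $\iint(\tau_{n,m}(h))^p$ by $(c_1\varepsilon+c_2)^p$, which does not vanish, and it is the homogeneity rescaling, together with the fact that the truncation also makes $\Vert h\Vert_1$ small, that removes $c_2$.
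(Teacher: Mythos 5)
Your proof is correct. The paper itself gives essentially no argument here: it disposes of Theorem \ref{theoremconv} in one sentence, appealing to the density of polynomials and ``standard arguments''. The route implicit in that remark is to write $f=P+r$ with $P$ a trigonometric polynomial close to $f$ in the $L\log L$ sense, to note that $S_{i,j}(P)=P$ once $i,j\geq \deg P$ so that the regularity of the N\"orlund logarithmic weights makes the contribution of $P$ vanish as $n,m\rightarrow\infty$, and to control the remainder by Theorem \ref{logest}; you instead truncate, $f=f\mathbf{1}_{\{|f|\le M\}}+f\mathbf{1}_{\{|f|>M\}}$, and handle the bounded part by Parseval together with the same regularity of the weights, which is an equally valid and equally elementary choice of dense subclass. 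The substantive point on which you go beyond the paper is the treatment of the additive constant $c_{2}$ in Theorem \ref{logest}: as you correctly observe, applying that estimate directly to the remainder only yields a bound of order $\left( c_{1}\varepsilon +c_{2}\right) ^{p}$, which does not tend to zero, and the rescaling $h\mapsto \lambda h$ with $\lambda =1/\eta$, combined with $\log ^{+}\left( \lambda u\right) \leq \log \lambda +\log ^{+}u$ and the fact that the truncation makes both $\Vert h\Vert _{1}$ and $\iint_{\mathbb{T}^{2}}\left\vert h\right\vert \log ^{+}\left\vert h\right\vert \,dxdy$ small, is exactly what is needed to defeat it. That step is invisible in the paper's one-line proof but is genuinely required for the ``standard argument'' to close, so your write-up is the more complete of the two.
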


\begin{corollary}
\label{cor}Let $f\in L\log L\left( \mathbb{T}^{2}\right) $. Then%
\begin{equation*}
\frac{1}{l_{n}l_{m}}\sum\limits_{i=0}^{n}\sum\limits_{j=0}^{m}\frac{%
\left\vert S_{i,j}\left( f;x,y\right) -f\left( x,y\right) \right\vert }{%
\left( n-i+1\right) \left( m-j+1\right) }\rightarrow 0
\end{equation*}%
in measure on $\mathbb{T}^{2}$, as $n,m\rightarrow \infty .$
\end{corollary}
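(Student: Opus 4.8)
The plan is to obtain Corollary \ref{cor} as an immediate consequence of Theorem \ref{theoremconv}, together with the elementary fact that convergence in the $L_{p}$ quasi-norm with $0<p<1$ forces convergence in measure. Throughout, write
\begin{equation*}
g_{n,m}\left( x,y\right) :=\frac{1}{l_{n}l_{m}}\sum\limits_{i=0}^{n}\sum\limits_{j=0}^{m}\frac{\left\vert S_{i,j}\left( f;x,y\right) -f\left( x,y\right) \right\vert }{\left( n-i+1\right) \left( m-j+1\right) },
\end{equation*}
so that $g_{n,m}$ is a nonnegative measurable function on $\mathbb{T}^{2}$ and the assertion to be proved is precisely that $g_{n,m}\rightarrow 0$ in measure on $\mathbb{T}^{2}$ as $n,m\rightarrow \infty $.

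First I would fix $\varepsilon >0$ and choose any exponent $p\in \left( 0,1\right) $. From the pointwise inequality $1_{\left\{ g_{n,m}>\varepsilon \right\} }\leq \varepsilon ^{-p}g_{n,m}^{p}$, which holds since $g_{n,m}\geq 0$, integration over $\mathbb{T}^{2}$ yields the Chebyshev-type bound
\begin{equation*}
\left\vert \left\{ \left( x,y\right) \in \mathbb{T}^{2}:g_{n,m}\left( x,y\right) >\varepsilon \right\} \right\vert \leq \frac{1}{\varepsilon ^{p}}\iint\limits_{\mathbb{T}^{2}}\left( g_{n,m}\left( x,y\right) \right) ^{p}dxdy.
\end{equation*}
Then I would invoke Theorem \ref{theoremconv}, which states exactly that the integral on the right tends to $0$ as $n,m\rightarrow \infty $ (here the hypothesis $f\in L\log L\left( \mathbb{T}^{2}\right) $ is used). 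Hence the measure of the level set $\left\{ g_{n,m}>\varepsilon \right\} $ tends to $0$ for every $\varepsilon >0$, which is the definition of convergence in measure, and the proof is complete.

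At the level of the corollary there is no genuine obstacle: all the analytic content has already been packaged into Theorems \ref{logest} and \ref{theoremconv}, the first of which supplies, for a fixed $f\in L\log L\left( \mathbb{T}^{2}\right) $, a bound on the $L_{p}$ quasi-norms of $\tau _{n,m}\left( f\right) $ uniform in $n,m$, while the second upgrades this into decay of the deviation. The only point worth flagging is that, since $0<p<1$, we are dealing with a quasi-norm rather than a norm; this causes no difficulty, because the passage from $L_{p}$-convergence to convergence in measure requires only the one-sided estimate $1_{\left\{ t>\varepsilon \right\} }\leq \left( t/\varepsilon \right) ^{p}$ for $t\geq 0$, which is valid regardless of whether $p\geq 1$ or $p<1$.
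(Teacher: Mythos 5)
Your argument is correct and is exactly the deduction the paper intends: Corollary \ref{cor} is presented as an immediate consequence of Theorem \ref{theoremconv}, and the Chebyshev-type inequality $\left\vert \left\{ g_{n,m}>\varepsilon \right\} \right\vert \leq \varepsilon ^{-p}\iint_{\mathbb{T}^{2}}g_{n,m}^{p}$ is the standard (implicit) bridge from $L_{p}$-convergence, $0<p<1$, to convergence in measure. You have simply written out the step the authors leave unsaid.
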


Uniting these results with statement  \ref{tkebuchava} of Tkebuchava we obtain.

\begin{theorem}
\label{eq}The following conditions are equalent

a)%
\begin{equation*}
L_{Q}(\mathbb{T}^{2})\subset L\log L(\mathbb{T}^{2});
\end{equation*}%
\newline
b) the strong N\"{o}rlund logarithmic means of double Fourier series for all
functions from Orlicz space $L_{Q}(\mathbb{T}^{2})$ converges in measure on $%
\mathbb{T}^{2}$;\newline
\newline
c) the N\"{o}rlund logarithmic means of double Fourier series for all functions
from Orlicz space $L_{Q}(\mathbb{T}^{2})$ converges in measure on $\mathbb{T}%
^{2}$;\newline
\newline
\end{theorem}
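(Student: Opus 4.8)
The plan is to establish the cyclic chain of implications $a)\Rightarrow b)\Rightarrow c)\Rightarrow a)$, in each step leaning on a result already proved above or quoted from the literature; no new hard estimate is needed, so the argument is essentially a synthesis of Theorems \ref{logest}--\ref{tkebuchava}.

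For $a)\Rightarrow b)$ I would argue as follows. Assume $L_Q(\mathbb{T}^2)\subset L\log L(\mathbb{T}^2)$ and fix $f\in L_Q(\mathbb{T}^2)$. Then $f\in L\log L(\mathbb{T}^2)$, and Corollary \ref{cor} (which itself follows from Theorem \ref{theoremconv}, since convergence in the $L_p$-metric with $0<p<1$ entails convergence in measure) gives that
\[
\frac{1}{l_nl_m}\sum_{i=0}^{n}\sum_{j=0}^{m}\frac{\left|S_{i,j}(f;x,y)-f(x,y)\right|}{(n-i+1)(m-j+1)}\longrightarrow 0
\]
in measure on $\mathbb{T}^2$ as $n,m\to\infty$; this is exactly statement b). For $b)\Rightarrow c)$ I would use the elementary pointwise domination of the ordinary means by the strong ones. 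Since $\frac{1}{l_nl_m}\sum_{i=0}^n\sum_{j=0}^m\frac{1}{(n-i+1)(m-j+1)}=1$ (because $\sum_{i=0}^{n}\frac{1}{n-i+1}=l_n$), the triangle inequality under the summation sign yields
\[
\left|t_{n,m}(f;x,y)-f(x,y)\right|=\left|\frac{1}{l_nl_m}\sum_{i=0}^{n}\sum_{j=0}^{m}\frac{S_{i,j}(f;x,y)-f(x,y)}{(n-i+1)(m-j+1)}\right|\le\frac{1}{l_nl_m}\sum_{i=0}^{n}\sum_{j=0}^{m}\frac{\left|S_{i,j}(f;x,y)-f(x,y)\right|}{(n-i+1)(m-j+1)},
\]
so whenever the right-hand side tends to $0$ in measure, so does $\left|t_{n,m}(f)-f\right|$; hence convergence in measure of the strong N\"orlund logarithmic means forces that of the N\"orlund logarithmic means.

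Finally, for $c)\Rightarrow a)$ I would argue by contraposition, and this is the step that genuinely uses Tkebuchava's Theorem \ref{tkebuchava}. Suppose $L_Q(\mathbb{T}^2)\nsubseteq L\log L(\mathbb{T}^2)$. Then Theorem \ref{tkebuchava} tells us that the set of $f\in L_Q(\mathbb{T}^2)$ for which the N\"orlund logarithmic means $t_{n,m}(f)$ converge in measure on $\mathbb{T}^2$ is of first Baire category in $L_Q(\mathbb{T}^2)$. Since the Orlicz space $L_Q(\mathbb{T}^2)$ is a complete metric (indeed Banach) space, the Baire category theorem forbids it from coinciding with a meager subset of itself; therefore there exists $f\in L_Q(\mathbb{T}^2)$ whose means $t_{n,m}(f)$ do not converge in measure, i.e. c) fails.

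I do not expect a serious obstacle: the two forward implications are immediate once Corollary \ref{cor} is in hand, and the closing implication is a routine application of the Baire category theorem. The only point demanding care is precisely that last step, where one must invoke completeness of $L_Q(\mathbb{T}^2)$ to exclude the degenerate possibility that the first-category ``good set'' exhausts the whole space; all the substantive analysis — the $L\log L\to L_p$ mapping property for the strong means and Tkebuchava's category statement — already resides in Theorems \ref{logest}--\ref{tkebuchava} rather than in the present synthesis.
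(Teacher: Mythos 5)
Your proposal is correct and is essentially the argument the paper intends (the paper only says "uniting these results with statement \ref{tkebuchava}"): a)$\Rightarrow$b) from Corollary \ref{cor}, b)$\Rightarrow$c) from the pointwise domination $\left\vert t_{n,m}(f)-f\right\vert \leq \frac{1}{l_{n}l_{m}}\sum_{i,j}\frac{\left\vert S_{i,j}(f)-f\right\vert }{(n-i+1)(m-j+1)}$ using $\sum_{i=0}^{n}\frac{1}{n-i+1}=l_{n}$, and c)$\Rightarrow$a) by contraposition via Tkebuchava's category theorem together with the Baire category theorem in the complete space $L_{Q}(\mathbb{T}^{2})$.
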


\section{Proof of Main Results}

\begin{proof}[Proof of Theorem \protect\ref{logest}]
Set $\alpha _{n}\left( t\right) :=\sin \left( \left( n+1\right) t\right)
,\beta _{n}\left( t\right) :=\cos \left( \left( n+1\right) t\right) $. Then
we can write%
\begin{eqnarray}
&&S_{n-k}\left( f;x\right)   \label{S_n-k} \\
&=&\frac{1}{\pi }\int\limits_{\mathbb{T}}f\left( t\right) \frac{\sin \left(
\left( n-k+1/2\right) \left( x-t\right) \right) }{2\sin \left( \left(
x-t\right) /2\right) }dt  \notag \\
&=&\frac{1}{\pi }\int\limits_{\mathbb{T}}f\left( t\right) \sin \left( \left(
n+1\right) \left( x-t\right) \right) \frac{\cos \left( \left( k+1/2\right)
\left( x-t\right) \right) }{2\sin \left( \left( x-t\right) /2\right) }dt
\notag \\
&&-\frac{1}{\pi }\int\limits_{\mathbb{T}}f\left( t\right) \cos \left( \left(
n+1\right) \left( x-t\right) \right) \frac{\sin \left( \left( k+1/2\right)
\left( x-t\right) \right) }{2\sin \left( \left( x-t\right) /2\right) }dt
\notag \\
&=&\frac{1}{\pi }\int\limits_{\mathbb{T}}f\left( t\right) \sin \left( \left(
n+1\right) \left( x-t\right) \right)   \notag \\
&&\times \left( \frac{\cos \left( \left( k+1/2\right) \left( x-t\right)
\right) }{2\sin \left( \left( x-t\right) /2\right) }-\frac{\cos \left(
\left( x-t\right) /2\right) }{2\sin \left( \left( x-t\right) /2\right) }%
\right) dt  \notag
\end{eqnarray}%
\begin{eqnarray*}
&&+\frac{1}{\pi }\int\limits_{\mathbb{T}}f\left( t\right) \frac{\sin \left(
\left( n+1\right) \left( x-t\right) \right) }{2\tan \left( \left( x-t\right)
/2\right) }dt \\
&&-\frac{1}{\pi }\int\limits_{\mathbb{T}}f\left( t\right) \cos \left( \left(
n+1\right) \left( x-t\right) \right) \frac{\sin \left( \left( k+1/2\right)
\left( x-t\right) \right) }{2\sin \left( \left( x-t\right) /2\right) }dt \\
&=&-\frac{\alpha _{n}\left( x\right) }{\pi }\int\limits_{\mathbb{T}}f\left(
t\right) \beta _{n}\left( t\right) \widetilde{D}_{k}\left( x-t\right) dt \\
&&+\frac{\beta _{n}\left( x\right) }{\pi }\int\limits_{\mathbb{T}}f\left(
t\right) \alpha _{n}\left( t\right) \widetilde{D}_{k}\left( x-t\right) dt
\end{eqnarray*}%
\begin{eqnarray*}
&&+\frac{1}{\pi }\int\limits_{\mathbb{T}}f\left( t\right) \frac{\sin \left(
\left( n+1\right) \left( x-t\right) \right) }{2\tan \left( \left( x-t\right)
/2\right) }dt \\
&&-\frac{\beta _{n}\left( x\right) }{\pi }\int\limits_{\mathbb{T}}f\left(
t\right) \beta _{n}\left( t\right) \frac{\sin \left( \left( k+1/2\right)
\left( x-t\right) \right) }{2\sin \left( \left( x-t\right) /2\right) }dt \\
&&-\frac{\alpha _{n}\left( x\right) }{\pi }\int\limits_{\mathbb{T}}f\left(
t\right) \alpha _{n}\left( t\right) \frac{\sin \left( \left( k+1/2\right)
\left( x-t\right) \right) }{2\sin \left( \left( x-t\right) /2\right) }dt \\
&=&-\alpha _{n}\left( x\right) \widetilde{S}_{k}\left( f\beta _{n};x\right)
+\beta _{n}\left( x\right) \widetilde{S}_{k}\left( f\alpha _{n};x\right)  \\
&&-\beta _{n}\left( x\right) S_{k}\left( f\beta _{n};x\right) -\alpha
_{n}\left( x\right) S_{k}\left( f\alpha _{n};x\right) +\overline{S}%
_{n+1}\left( f;x\right)
\end{eqnarray*}%
Hence%
\begin{eqnarray*}
\tau _{n}\left( f;x\right)  &:&=\frac{1}{l_{n}}\sum\limits_{k=0}^{n}\frac{%
\left\vert S_{k}\left( f;x\right) \right\vert }{n-k+1}\leq \widetilde{R}%
_{n}\left( f\beta _{n},x\right) +\widetilde{R}_{n}\left( f\alpha
_{n},x\right)  \\
&&+R_{n}\left( f\beta _{n},x\right) +R\sigma _{n}\left( f\alpha
_{n},x\right) +\overline{S}_{n+1}\left( f;x\right) .
\end{eqnarray*}

Since%
\begin{equation*}
\left( \int\limits_{\mathbb{T}}\left\vert S_{n}\left( f;x\right) \right\vert
^{p}dx\right) ^{1/p}\leq c_{p}\int\limits_{\mathbb{T}}\left\vert f\left(
x\right) \right\vert dx.
\end{equation*}

from (\ref{estoneconj}) we conclude that%
\begin{equation}
\left( \int\limits_{\mathbb{T}}\left( \tau _{n}\left( f;x\right) \right)
^{p}dx\right) ^{1/p}\leq c_{p}\int\limits_{\mathbb{T}}\left\vert f\left(
x\right) \right\vert dx,f\in L_{1}\left( \mathbb{T}\right) ,0<p<1,f\in
L_{1}\left( \mathbb{T}\right) .  \label{t_n}
\end{equation}

Now, we consider rectangular partial sums of double Fourier series. From (%
\ref{S_n-k}) we can write
\begin{eqnarray}
S_{n-i,m-j}\left( f;x,y\right) &=&S_{n-i}\left( S_{m-j}\left( f;y\right)
;x\right)  \label{S_n-im-j} \\
&&=-\alpha _{n}\left( x\right) \widetilde{S}_{i}\left( S_{m-j}\left(
f;y\right) \beta _{n};x\right)  \notag \\
&&+\beta _{n}\left( x\right) \widetilde{S}_{i}\left( S_{m-j}\left(
f;y\right) \alpha _{n};x\right)  \notag \\
&&-\beta _{n}\left( x\right) S_{i}\left( S_{m-j}\left( f;y\right) \beta
_{n};x\right)  \notag \\
&&-\alpha _{n}\left( x\right) S_{i}\left( S_{m-j}\left( f;y\right) \alpha
_{n};x\right)  \notag \\
&&+\overline{S}_{n+1}\left( S_{m-j}\left( f,y\right) ;x\right)  \notag \\
&:&=\sum\limits_{s=1}^{4}I_{s}\left( i,j;x,y\right) +\overline{S}%
_{n+1}\left( S_{m-j}\left( f,y\right) ;x\right) .  \notag
\end{eqnarray}

Now, we turn our attention to $I_{1}\left( i,j;x,y\right) $. From (\ref%
{S_n-k}) we have%
\begin{eqnarray}
I_{1}\left( i,j;x,y\right) &=&-\alpha _{n}\left( x\right) S_{m-j}\left(
\widetilde{S}_{i}\left( f\beta _{n};x\right) ;y\right)  \label{I1-I5} \\
&=&\alpha _{n}\left( x\right) \alpha _{m}\left( y\right) \widetilde{S}%
_{j}\left( \widetilde{S}_{i}\left( f\beta _{n};x\right) \beta _{m};y\right)
\notag \\
&&-\alpha _{n}\left( x\right) \beta _{m}\left( y\right) \widetilde{S}%
_{j}\left( \widetilde{S}_{i}\left( f\beta _{n};x\right) \alpha _{m};y\right)
\notag \\
&&+\alpha _{n}\left( x\right) \beta _{m}\left( y\right) S_{j}\left(
\widetilde{S}_{i}\left( f\beta _{n};x\right) \beta _{m};y\right)  \notag \\
&&+\alpha _{n}\left( x\right) \alpha _{m}\left( y\right) S_{j}\left(
\widetilde{S}_{i}\left( f\beta _{n};x\right) \alpha _{m};y\right)  \notag \\
&&-\alpha _{n}\left( x\right) \overline{S}_{m+1}\left( \widetilde{S}%
_{i}\left( f\beta _{n};x\right) ;y\right)  \notag \\
&=&\alpha _{n}\left( x\right) \alpha _{m}\left( y\right) \widetilde{S}%
_{ij}^{11}\left( f\beta _{n}\beta _{m};x,y\right)  \notag \\
&&-\alpha _{n}\left( x\right) \beta _{m}\left( y\right) \widetilde{S}%
_{ij}^{11}\left( f\beta _{n}\alpha _{m};x,y\right)  \notag \\
&&+\alpha _{n}\left( x\right) \beta _{m}\left( y\right) \widetilde{S}%
_{ij}^{10}\left( f\beta _{n}\beta _{m};x,y\right)  \notag \\
&&+\alpha _{n}\left( x\right) \alpha _{m}\left( y\right) \widetilde{S}%
_{ij}^{10}\left( f\beta _{n}\alpha _{m};x,y\right)  \notag \\
&&-\alpha _{n}\left( x\right) \widetilde{\overline{S}}_{i,m+1}^{01}\left(
f\beta _{n};x,y\right)  \notag \\
&=&\sum\limits_{l=1}^{4}I_{1l}\left( i,j;x,y\right) +I_{15}\left(
i,m;x,y\right) .  \notag
\end{eqnarray}

From (\ref{weak-R}) we have%
\begin{eqnarray}
&&\iint\limits_{\mathbb{T}^{2}}\left( \frac{1}{l_{n}l_{m}}%
\sum\limits_{i=0}^{n}\sum\limits_{j=0}^{m}\frac{\left\vert I_{11}\left(
i,j;x,y\right) \right\vert }{\left( i+1\right) \left( j+1\right) }\right)
^{p}dxdy  \label{I11} \\
&\leq &\iint\limits_{\mathbb{T}^{2}}\left\vert \widetilde{R}%
_{n,m}^{11}\left( f\beta _{n}\beta _{m};x,y\right) \right\vert ^{p}dxdy
\notag
\end{eqnarray}%
\begin{equation*}
\leq c_{1}\iint\limits_{\mathbb{T}^{2}}\left\vert f\left( x,y\right)
\right\vert \log ^{+}\left\vert f\left( x,y\right) \right\vert dxdy+c_{2}.
\end{equation*}

Analogously, we can prove that%
\begin{eqnarray}
&&\iint\limits_{\mathbb{T}^{2}}\left( \frac{1}{l_{n}l_{m}}%
\sum\limits_{i=0}^{n}\sum\limits_{j=0}^{m}\frac{\left\vert I_{1l}\left(
i,j;x,y\right) \right\vert }{\left( i+1\right) \left( j+1\right) }\right)
^{p}dxdy  \label{Il1} \\
&\leq &c_{1}\iint\limits_{\mathbb{T}^{2}}\left\vert f\left( x,y\right)
\right\vert \log ^{+}\left\vert f\left( x,y\right) \right\vert
dxdy+c_{2},l=2,3,4.  \notag
\end{eqnarray}

Now, we turn our attention to $I_{15}\left( i,m;x,y\right) $. Since%
\begin{equation*}
\widetilde{\overline{S}}_{i,m+1}^{10}\left( f\beta _{n};x,y\right) =%
\widetilde{S}_{i}\left( \overline{S}_{m+1}\left( f;y\right) \beta
_{n};x\right) ,
\end{equation*}%
\begin{equation*}
f\left( \cdot ,y\right) \in L\log L\left( \mathbb{T}\right) \text{, for a.e.
}y\in \mathbb{T}\text{ and }f\in L\log L\left( \mathbb{T}^{2}\right)
\end{equation*}%
and%
\begin{equation*}
\int\limits_{\mathbb{T}}\left\vert \overline{S}_{m+1}\left( f;x,y\right)
\right\vert dx\leq c_{1}\int\limits_{\mathbb{T}}\left\vert f\left(
x,y\right) \right\vert \log ^{+}\left\vert f\left( x,y\right) \right\vert
dx+c_{2}
\end{equation*}%
from (\ref{estoneconj}) we obtain%
\begin{eqnarray*}
&&\left( \int\limits_{\mathbb{T}}\left( \frac{1}{l_{n}}\sum\limits_{i=0}^{n}%
\frac{\left\vert \widetilde{S}_{i,m+1}^{10}\left( f\beta _{n};x,y\right)
\right\vert }{i+1}\right) ^{p}dx\right) ^{1/p} \\
&\leq &\int\limits_{\mathbb{T}}\left\vert \overline{S}_{m+1}\left(
f;x,y\right) \right\vert dx \\
&\leq &c_{1}\int\limits_{\mathbb{T}}\left\vert f\left( x,y\right)
\right\vert \log ^{+}\left\vert f\left( x,y\right) \right\vert dx+c_{2},
\end{eqnarray*}%
Consequently,
\begin{eqnarray}
&&\iint\limits_{\mathbb{T}^{2}}\left( \frac{1}{l_{n}l_{m}}%
\sum\limits_{i=0}^{n}\sum\limits_{j=0}^{m}\frac{\left\vert I_{15}\left(
i,m;x,y\right) \right\vert }{\left( i+1\right) \left( j+1\right) }\right)
^{p}dxdy  \label{I15} \\
&\leq &c_{1}\iint\limits_{\mathbb{T}^{2}}\left\vert f\left( x,y\right)
\right\vert \log ^{+}\left\vert f\left( x,y\right) \right\vert dxdy+c_{2}.
\notag
\end{eqnarray}

Combining (\ref{I1-I5})-(\ref{I15}) we get%
\begin{eqnarray}
&&\iint\limits_{\mathbb{T}^{2}}\left( \frac{1}{l_{n}l_{m}}%
\sum\limits_{i=0}^{n}\sum\limits_{j=0}^{m}\frac{\left\vert I_{1}\left(
i,j;x,y\right) \right\vert }{\left( i+1\right) \left( j+1\right) }\right)
^{p}dxdy  \label{I1} \\
&\leq &c_{1}\iint\limits_{\mathbb{T}^{2}}\left\vert f\left( x,y\right)
\right\vert \log ^{+}\left\vert f\left( x,y\right) \right\vert dxdy+c_{2}.
\notag
\end{eqnarray}

Analogously, we can prove%
\begin{eqnarray}
&&\iint\limits_{\mathbb{T}^{2}}\left( \frac{1}{l_{n}l_{m}}%
\sum\limits_{i=0}^{n}\sum\limits_{j=0}^{m}\frac{\left\vert I_{s}\left(
i,j;x,y\right) \right\vert }{\left( i+1\right) \left( j+1\right) }\right)
^{p}dxdy  \label{I2-I4} \\
&\leq &c_{1}\iint\limits_{\mathbb{T}^{2}}\left\vert f\left( x,y\right)
\right\vert \log ^{+}\left\vert f\left( x,y\right) \right\vert
dxdy+c_{2},s=2,3,4,  \notag
\end{eqnarray}%
\begin{eqnarray}
&&\left( \iint\limits_{\mathbb{T}^{2}}\left( \frac{1}{l_{n}l_{m}}%
\sum\limits_{i=0}^{n}\sum\limits_{j=0}^{m}\frac{\left\vert \overline{S}%
_{n+1}\left( S_{m-j}\left( f,y\right) ;x\right) \right\vert }{i+1}\right)
^{p}dxdy\right) ^{1/p}  \label{S_nm-j} \\
&=&\left( \iint\limits_{\mathbb{T}^{2}}\left( \frac{1}{l_{n}l_{m}}%
\sum\limits_{i=0}^{n}\sum\limits_{j=0}^{m}\frac{\left\vert S_{m-j}\left(
\overline{S}_{n+1}\left( f,x\right) ;y\right) \right\vert }{i+1}\right)
^{p}dxdy\right) ^{1/p}  \notag \\
&=&\left( \iint\limits_{\mathbb{T}^{2}}\left( \frac{1}{l_{m}}%
\sum\limits_{j=0}^{m}\frac{\left\vert S_{m-j}\left( \overline{S}_{n+1}\left(
f,x\right) ;y\right) \right\vert }{i+1}\right) ^{p}dxdy\right) ^{1/p}  \notag
\\
&\leq &\leq c_{1}\iint\limits_{\mathbb{T}^{2}}\left\vert f\left( x,y\right)
\right\vert \log ^{+}\left\vert f\left( x,y\right) \right\vert dxdy+c_{2}.
\notag
\end{eqnarray}

Combining (\ref{S_n-im-j}), (\ref{I1}), (\ref{I2-I4}) and (\ref{S_nm-j}) we
complete the proof of Theorem \ref{logest}.
\end{proof}

By the density of polynomials and by virtue of standard arguments \cite{Zy}
we can see the validity of Theorem \ref{theoremconv}

\end{document}